\newtheorem{theorem}{Theorem}
\newtheorem{proposition}[theorem]{Proposition}
\newtheorem{lemma}[theorem]{Lemma}
\newtheorem{corollary}[theorem]{Corollary}
\newtheorem{ex}{Example}
\begin{document}

\title{On equilibria stability in an epidemiological SIR model with recovery-dependent infection rate}

\author{
Andres David Báez-Sánchez \thanks{Universidade Tecnológica Federal do Paraná (UTFPR), Câmpus Curitiba, Departamento Acadêmico de Matemática. Av. Sete de Setembro, 3165, 80230-901, Curitiba, PR, Brasil.} \thanks{adsanchez@utfpr.edu.br}
\\
Nara Bobko \footnotemark[1] \thanks{narabobko@utfpr.edu.br} 
}

\maketitle

\begin{abstract}

{\bf Abstract}.
We consider an epidemiological SIR model with an infection rate depending on the recovered population. 
We establish sufficient conditions for existence, uniqueness, and stability (local and global) of endemic equilibria and consider also the stability of the disease-free equilibrium. 
We show that, in contrast with classical SIR models, a system with a recovery-dependent infection rate can have multiple endemic stable equilibria (multistability) and multiple stable and unstable saddle points of equilibria. 
We establish conditions for the occurrence of these phenomena and illustrate the results with some examples.

{\bf Keywords}. SIR epidemiological model, Recovery-dependent infection rate, Endemic equilibria \and Multistability
\end{abstract}

\section{Introduction}
\label{intro}

Compartmental models, and particularly SIR models, have been extensively used for mathematical modeling of infectious diseases within a population~\cite{martcheva2015introduction,Brauer}. 

The main idea behind SIR models is to consider that a population $N$ is divided into three disjoint categories or compartments: susceptible individuals, infected individuals, and recovered or deceased individuals, denoted by $S$, $I$, and $R$, respectively, so that $N=S+I+R$.
Depending on the modeling approach, the variables $S$, $I$, and $R$ are considered to be the absolute numbers of individuals in each group or the proportion of individuals relative to the total population. In this work, we consider this latter approach.

Within these considerations, an epidemiological SIR model with vital dynamics and constant population can be stated as

\begin{equation}
\begin{split}
\dfrac{dS}{dt} &= \mu-\beta\,S\,I-\mu\,S\\
\dfrac{dI}{dt} &= \beta\,S\,I-\mu\,I-\gamma I\\
\dfrac{dR}{dt} &= \gamma \, I-\mu\,R\\
\end{split}
\label{model} 
\end{equation}
with $S(0) + I(0) + R(0) = 1$. 
The positive real numbers $\mu$, $\beta$, and $\gamma$ can be interpreted as birth-mortality rate, infection rate, and recovery rate, respectively. 
For more details about SIR models see for example~\cite{martcheva2015introduction}. {\color{black} Note that from~\eqref{model}, we can obtain 
$$\dfrac{dN}{dt} = \dfrac{dS}{dt} + \dfrac{dI}{dt} + \dfrac{dR}{dt} = \mu(1-N),$$
and since $N(t) \equiv 1$ is the only solution of this equation, satisfying $N(0) = 1$, we can consider  $N(t)=S(t) + I(t) + R(t) = 1$ for all $t$. }

Letting 
$$\tau=t\mu; \quad \widetilde{\beta}=\frac{\beta}{\mu};\quad \widetilde{\gamma}=\frac{\gamma}{\mu};\quad k=1+\widetilde{\gamma};\quad \text{ and }\quad R_0=\frac{\beta}{\mu+\gamma}=\frac{\widetilde{\beta}}{1+\widetilde{\gamma}}=\frac{\widetilde{\beta}}{k},$$

we obtain a redimensionalized version of~\eqref{model}:
\begin{equation*}
 \begin{split}
 \dfrac{dS}{d\tau} &= 1-kR_0\,S\,I-\,S\\
 \dfrac{dI}{d\tau} &= kR_0\,S\,I-kI\\
 \dfrac{dR}{d\tau} &= (k-1)\, I-R,\\
  \end{split}
\end{equation*}
with $S(0) + I(0) + R(0) = 1$.


Note that the parameters $\widetilde{\beta}, \widetilde{\gamma}, k$ and $R_0$ are all positive real numbers and in particular,  $k>1$. 
The parameter $R_0$ is called the basic reproduction number and its fundamental role in the description of the equilibria stability in the classical SIR model is well known~\cite{martcheva2015introduction}. 
$R_0$ can be interpreted as the number of cases one case generates, on average, in an uninfected population. It represents a measure of the effectiveness of the infection.

Several generalizations and modifications of the SIR model have been proposed by other authors, particularly considering non-constant epidemiological rates (see,~\cite{greenhalgh1995modeling,o1997epidemic,thieme2002endemic,liu2012infectious,alexander2005bifurcation,villavicencio2017backward,greenhalgh2017time,roberts2015nine}). 
These kinds of considerations have been recognized as necessary features to model more realistic epidemic situations, like the interaction between human behavior and disease dynamics~\cite{roberts2015nine,Manfredi}.

{\color{black} 
Consider, for example, the population behavior with respect to some possible anti-infection measures (like vaccination, quarantine or sexual precautions). 
The propagation of the disease can be affected by changes in the population behavior and, in the same way, the risk perception and state of the disease can influence the behavior of the population related to anti-infection measures \cite{ferrer2015risk}. 
Recent measles outbreaks, for example, are considered to be a direct consequence of the increasing number of unvaccinated children, due to parental behavior and beliefs~\cite{WHO}. 
In the case of antiretroviral therapy (ART) for HIV, patients under successful ART have lower morbidity and mortality rates and, in many cases, their viral load becomes so low that the patient can be considered  almost recovered. 
Massive scale-up of this successful ART has been considered one of the possible causes of an increase in the practice of sexual risky behaviors and as consequence, an increase in the number of HIV cases and other sexually transmitted diseases~\cite{boily2005impact,levy2011men,zaidi2013dramatic}. 
Most recently, in the context of the COVID-19 pandemic, human behavior has played a fundamental role in the diseases dynamics~\cite{Kraemereabb4218, Chinazzieaba9757, betsch2020how} and, at the same time, it has become evident that the increase in the number of infected  and death cases changed the way population and policymakers embrace anti-infection strategies~\cite{Remuzzi2020,Pisano,parkerNYT}.

In all the situations described above, infection rates changed during the evolution of the disease and, in the last two cases, these changes can be considered to be related to changes in the epidemiological variables $S, I$, or $R$. 
In the present paper, we are interested in the stability of equilibria in situations where the infection rate changes depending on the recovered/removed population.}
 
Hence, we propose the following generalized SIR model with a recovery-dependent infection rate:

\begin{equation}
 \begin{split}
 \dfrac{dS}{d\tau} &= 1-f(R)\,S\,I-\,S\\
 \dfrac{dI}{d\tau} &=f(R)\,S\,I-kI\\
 \dfrac{dR}{d\tau} &= (k-1)\, I-R,
  \end{split}
 \label{model4}
\end{equation}
{\color{black} where $f$ is a positive function of $R$, generalizing the infection rate $\beta$,} and $S(0)+I(0)+R(0)~=~1$. 
{\color{black} The function $f$  can be interpreted  as a quantification of the effect on the infection rate, produced by control strategies that depend on the size of $R$,  or, since $S+I+R=1$, that depend on  the  susceptible and infected population simultaneously.

It is worth noting that~\cite{o1997epidemic} is considered a deterministic model similar to~\eqref{model4}, and, although the main focus was on the stochastic version, a recurrent solution to the model was obtained when the recovery-dependent infection rate is considered piecewise constant. 

Our work focuses on the stability and multistability features of the equilibrium solutions of model~\eqref{model4}.}

The article is organized as follows: In Section~\ref{sec.simplified_model} we develop a {\color{black}two-dimensional} simplified model equivalent to~\eqref{model4} and, {\color{black}under additional conditions on $f$,} we prove several interesting results, including the non-existence of non-constant positive periodic solutions. 
In Section~\ref{sec.Disease_free}, the disease-free equilibrium is considered and two results about its local and global stability are established. 
The results of this section generalize well-known results for the classical SIR model. 

Section~\ref{sec.Endemic} consider{\color{black}s} endemic equilibrium points.  First, we define an auxiliary function $g$ and establish sufficient conditions for the existence of endemic equilibrium points in terms of $f$ and $g$.  
Later, we consider the local stability of endemic equilibrium points and we illustrate conditions for the occurrence of multiple locally stable endemic equilibrium points (multistability). 
Finally, we consider conditions for the uniqueness and global stability of an endemic equilibrium point.  
Final comments and concluding remarks are presented in Section~\ref{conclusion}.

\section{Simplified Model}
\label{sec.simplified_model}

In this section, we develop a simplified {\color{black}two-dimensional} model equivalent to~\eqref{model4}. 
In the following lemma, we show that model ~\eqref{model4} is well defined in the sense that, for all solutions, the conditions $S, I, R \in [0,1]$ and $S+I+R=1$ are preserved under the dynamics described in model ~\eqref{model4}.

\begin{lemma} \label{lemma_invariant} 
The set $\Omega = \{ S\geq 0, I\geq 0, R\geq 0 \text{ and } S+I+R=1\}$ is positively invariant under~\eqref{model4}.
\end{lemma}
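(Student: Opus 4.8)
The plan is to prove the two defining properties of $\Omega$ separately: invariance of the affine constraint $S+I+R=1$, and preservation of the nonnegativity of each coordinate. First I would verify that the plane $S+I+R=1$ is invariant. Setting $N=S+I+R$ and adding the three equations of~\eqref{model4}, the terms $\pm f(R)SI$ cancel while $-kI+(k-1)I=-I$ combines with $-S$ and $-R$ to give $\frac{dN}{d\tau}=1-N$, exactly as in the computation carried out for~\eqref{model} in the Introduction. Since $N(\tau)\equiv 1$ is the unique solution of this scalar linear ODE with $N(0)=1$, any trajectory starting on the plane $S+I+R=1$ remains on it.

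Next I would establish that $S,I,R$ stay nonnegative, which is where the coupling between the equations requires some care. The cleanest route is to rewrite each equation in the variable whose sign we wish to control and apply an integrating factor. For $I$, equation~\eqref{model4} reads $\frac{dI}{d\tau}=\bigl(f(R(\tau))S(\tau)-k\bigr)I$, a linear equation in $I$ with coefficient given along the trajectory, so $I(\tau)=I(0)\exp\!\bigl(\int_0^\tau (f(R(s))S(s)-k)\,ds\bigr)$; hence $I(\tau)\ge 0$ whenever $I(0)\ge 0$, independently of the signs of $S$ and $R$. For $R$, the equation $\frac{dR}{d\tau}+R=(k-1)I$ integrates to $R(\tau)=e^{-\tau}R(0)+\int_0^\tau e^{-(\tau-s)}(k-1)I(s)\,ds$; since $k>1$ and $I\ge 0$ has just been shown, both terms are nonnegative and $R(\tau)\ge 0$. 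Finally, writing $\frac{dS}{d\tau}+\bigl(f(R)I+1\bigr)S=1$ and setting $A(\tau)=\int_0^\tau (f(R(s))I(s)+1)\,ds$ gives $S(\tau)=e^{-A(\tau)}\bigl(S(0)+\int_0^\tau e^{A(s)}\,ds\bigr)\ge 0$, the positivity of the exponential factors making this automatic.

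The only genuine obstacle is the logical dependency created by the coupling: the nonnegativity of $R$ relies on that of $I$, so the three arguments must be carried out in the order $I\to R\to S$ rather than treated independently. An equivalent formulation, which I would mention as an alternative, is to invoke the subtangentiality (Nagumo) condition on the boundary of the nonnegative orthant: on the face $S=0$ one has $\frac{dS}{d\tau}=1>0$, on $I=0$ one has $\frac{dI}{d\tau}=0$, and on $R=0$ one has $\frac{dR}{d\tau}=(k-1)I\ge 0$, so the vector field never points out of the orthant along its boundary. Combined with the invariance of the plane $S+I+R=1$, either formulation yields the positive invariance of $\Omega$ and completes the proof.
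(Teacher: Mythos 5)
Your proof is correct, and your primary argument takes a genuinely different route from the paper's. The paper handles nonnegativity by inspecting the vector field on the boundary faces of the nonnegative octant: if $S(0)=0$ then $\frac{dS}{d\tau}=1>0$, if $I(0)=0$ then $\frac{dI}{d\tau}=0$, and if $R(0)=0$ then $\frac{dR}{d\tau}=(k-1)I(0)\geq 0$ --- i.e., exactly the Nagumo-type subtangentiality check you relegate to a remark at the end --- and then concludes with the same computation $\frac{dN}{d\tau}=1-N$, $N(0)=1$, that you use for the affine constraint. Your main argument instead integrates each equation explicitly along the trajectory: the variation-of-constants formulas for $I$, $R$, and $S$ make nonnegativity manifest from the positivity of exponentials, with the single logical dependency (nonnegativity of $R$ requires nonnegativity of $I$) correctly identified and ordered. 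What your route buys is rigor: the paper's face-by-face check, as literally stated, establishes only the sign of the derivative at a single instant on the boundary (note in particular that $\frac{dI}{d\tau}=0$ on the face $I=0$ is a tangency, not an inward-pointing condition), and turning that into invariance requires either Nagumo's theorem or a uniqueness argument, neither of which the paper spells out; your explicit formulas need no such supplement. What the paper's route buys is brevity and a transparent geometric picture of why trajectories cannot exit through each face. Both are acceptable proofs of the lemma.
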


\begin{proof} First, we consider the behavior of the solutions with some initial condition equal to $0$.

If $S(0)=0$, then $\dfrac{dS}{d\tau}(0) = 1 > 0$. 
If $I(0)=0$, then $\dfrac{dI}{d\tau}(0) = 0 $. 
If $R(0)=0$, then $\dfrac{dR}{d\tau}(0) = (k-1)I(0) \geq 0 $ since $k > 1$ and we consider $I(0) \geq 0$. 
This proves the positive invariance of the positive octant.

Consider now $N(t) = S(t) + I(t) +R(t)$. From~\eqref{model4} we have that

$$\dfrac{dN}{d\tau} = \dfrac{dS}{d\tau} + \dfrac{dI}{d\tau} +\dfrac{dR}{d\tau} = 1 - S - I - R = 1-N.$$

Since $N(0) = 1$, then the solution of the above {\color{black} ordinary differential equation} is $N(t) = 1$. 
That is,  $S(t) + I(t) +R(t) = 1$ for all $t \geq {\color{black} 0}$.
\end{proof}

Lemma~\ref{lemma_invariant} implies also that the solutions are bounded and, as a further consequence, we can consider $S = 1 - I - R$ to obtain the following simplified model:

\begin{equation}
 \begin{split}
 \dfrac{dI}{d\tau} &=I[f(R)\,(1-I-R)-k]\\
 \dfrac{dR}{d\tau} &= (k-1)\, I-R.\\
  \end{split}
 \label{model5}
\end{equation}

The study of the equilibrium points of~\eqref{model4} will be done through the study of the simplified model ~\eqref{model5}. 
Hence, it will be relevant to consider the associated Jacobian matrix given by: 

\begin{equation} J(I,R)=
\begin{bmatrix} 
    f(R)\,(1-I-R)-k-If(R)&\quad I\left[\displaystyle\frac{df}{dR}\cdot(1-I-R)-f(R)\right]\\
    (k-1)&-1
\end{bmatrix},
\label{jacobian}
\end{equation}
{\color{black} valid when $\frac{df}{dR}$ is well defined. 
In fact, assuming some additional conditions on $f$, we can obtain the following useful result.}

\begin{lemma} \label{noperiodic}   Let $f$ be a positive function, continuously differentiable on $\mathbb R$. The model given by~\eqref{model5} does not have non-constant periodic solutions with $0<I(t)$ for all $t$.
\end{lemma}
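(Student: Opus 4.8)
The plan is to use the Bendixson--Dulac criterion on the planar system~\eqref{model5}, with the key step being the selection of a suitable Dulac function. Writing the right-hand sides as
\begin{equation*}
P(I,R) = I\bigl[f(R)(1-I-R)-k\bigr], \qquad Q(I,R) = (k-1)I - R,
\end{equation*}
I recall that if there is a $C^1$ function $B(I,R)$ for which the quantity $\partial_I(BP) + \partial_R(BQ)$ keeps a constant (nonzero) sign throughout a simply connected region, then no non-constant periodic orbit can lie entirely in that region. The natural region here is the open half-plane $\{(I,R) : I > 0\}$, which is simply connected and contains every trajectory with $I(t) > 0$ for all $t$, so the hypothesis on the solution matches the domain on which I will work.

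The decisive choice is the Dulac function $B(I,R) = 1/I$, which is smooth and well defined precisely on $\{I > 0\}$. With this $B$ one computes
\begin{equation*}
BP = f(R)(1-I-R) - k, \qquad BQ = (k-1) - \frac{R}{I},
\end{equation*}
so that $\partial_I(BP) = -f(R)$ and $\partial_R(BQ) = -\,1/I$. Therefore
\begin{equation*}
\frac{\partial (BP)}{\partial I} + \frac{\partial (BQ)}{\partial R} = -f(R) - \frac{1}{I}.
\end{equation*}
Since $f$ is assumed positive and $I > 0$ on the region of interest, this expression is strictly negative everywhere on $\{I > 0\}$; in particular it never vanishes and keeps constant sign.

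I would then invoke Bendixson--Dulac directly: a non-constant periodic solution would trace a closed orbit $C$ bounding a region $D \subset \{I>0\}$, and Green's theorem would force $\iint_D \bigl(\partial_I(BP)+\partial_R(BQ)\bigr)\,dI\,dR$ to equal the (zero) line integral of $BQ\,dI - BP\,dR$ around $C$, contradicting the strict negativity of the integrand. This yields the claimed non-existence of non-constant periodic solutions with $I(t) > 0$. I do not anticipate a genuine obstacle once the function $B = 1/I$ is in hand; the only point requiring a little care is to confirm that the domain $\{I>0\}$ is simply connected and that the continuous differentiability of $f$ guarantees $BP$ and $BQ$ are $C^1$ there, both of which are immediate. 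The entire difficulty of the lemma is thus concentrated in guessing the weight $1/I$, after which the argument is routine.
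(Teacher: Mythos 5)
Your proposal is correct and uses exactly the same approach as the paper: the Bendixson--Dulac criterion with the Dulac function $1/I$ on the half-plane $\{I>0\}$, yielding the same divergence expression $-f(R)-1/I<0$. Your write-up merely makes explicit the Green's theorem argument behind the criterion, which the paper leaves implicit.
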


\begin{proof}
Consider $\phi$ given by $\phi(I,R)=\frac{1}{I}$ we have that 
\begin{equation}\label{Bendixson}
\dfrac{\partial}{\partial I}\left(\phi(I,R)\,I[f(R)\,(1-I-R)-k] \right)+\dfrac{\partial}{\partial R}\left(\phi(I,R)\,[(k-1)\, I-R]\right)=-f(R)-\frac{1}{I}<0,
\end{equation}
if $I>0$. From the Bendixson-Dulac criterion, it follows that the system does not have a non-constant periodic solution lying entirely in any simply connected region of the upper-plane $I>0$, so~\eqref{model5} does not have non-constant periodic solutions with $0<I(t)$ for all $t$.
\end{proof}

\section{Disease-free Equilibrium} 
\label{sec.Disease_free}

Note that $(I^*,R^*)=(0,0)$ is an equilibrium point of~\eqref{model5} corresponding to a  disease-free state. 
The following results  generalize well-known results related to the stability of the disease-free equilibrium in the classical SIR model~\cite{martcheva2015introduction}, considering $\frac{f(R)}{k}$ as a  \textit{variable} reproduction number $R_0$. 

\begin{lemma} \label{lemma_origem}  {\color{black} Let $f$ be a positive function, continuously differentiable on $\mathbb R$.} 
If $\frac{f(0)}{k}<1$, then $(0,0)$ is a locally stable equilibrium point of~\eqref{model5}. 
If $\frac{f(0)}{k}>1$, then $(0,0)$ is a local saddle equilibrium point.
\end{lemma}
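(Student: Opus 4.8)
The plan is to analyze the local behavior of the equilibrium $(0,0)$ via linearization, using the Jacobian matrix $J(I,R)$ already computed in~\eqref{jacobian}. Evaluating at $(I^*,R^*)=(0,0)$ and recalling that $S=1-I-R$ so that $1-I-R=1$ there, the Jacobian collapses to a lower-triangular form
\begin{equation*}
J(0,0)=
\begin{bmatrix}
f(0)-k & 0\\[2pt]
k-1 & -1
\end{bmatrix},
\end{equation*}
since the $(1,2)$ entry carries an explicit factor of $I$ and therefore vanishes. The eigenvalues are then immediately the diagonal entries, $\lambda_1=f(0)-k$ and $\lambda_2=-1$.

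From here the two claims follow by the standard linearization (Hartman--Grobman) argument. First I would note that $\lambda_2=-1<0$ unconditionally, so the stability type is governed entirely by the sign of $\lambda_1=f(0)-k$. If $\frac{f(0)}{k}<1$, i.e. $f(0)<k$, then $\lambda_1<0$, both eigenvalues are negative, the origin is a hyperbolic sink, and hence a locally asymptotically stable equilibrium of~\eqref{model5}. If $\frac{f(0)}{k}>1$, i.e. $f(0)>k$, then $\lambda_1>0>\lambda_2$, giving one positive and one negative eigenvalue, so the origin is a hyperbolic saddle. In both cases hyperbolicity holds (no eigenvalue has zero real part), which is precisely what is needed to transfer the linear classification to the nonlinear system.

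The only genuine content beyond this bookkeeping is justifying that the linearization is legitimate, which requires $f$ to be continuously differentiable near $R=0$ so that the vector field in~\eqref{model5} is $C^1$ and the Jacobian~\eqref{jacobian} is well defined; this is exactly the hypothesis placed on $f$ in the statement. I expect no real obstacle here: the main ``trick'' is simply observing that the off-diagonal upper entry drops out at $I=0$, which makes the spectrum explicit without any characteristic-polynomial computation. I would close by remarking that the borderline case $f(0)=k$ (equivalently the variable reproduction number $\frac{f(0)}{k}=1$) is non-hyperbolic and thus deliberately excluded, mirroring the threshold phenomenon at $R_0=1$ in the classical SIR model.
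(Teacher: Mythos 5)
Your proof is correct and follows essentially the same route as the paper: both evaluate the Jacobian~\eqref{jacobian} at $(0,0)$, observe that the $(1,2)$ entry vanishes because of its factor of $I$, and read off the eigenvalues $f(0)-k$ and $-1$ to classify the equilibrium. Your additional remarks on hyperbolicity, the $C^1$ hypothesis, and the borderline case $f(0)=k$ are sound elaborations of what the paper leaves implicit.
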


\begin{proof} The results follow from~\eqref{jacobian}, since $J(0,0)=\begin{bmatrix}f(0)-k&0\\k-1&-1\\\end{bmatrix}$ has eigenvalues equal to $-1$ and $f(0)-k$.
\end{proof}

\begin{lemma}\label{lemma_origem_global} {\color{black} Let $f$ be a positive function, continuously differentiable on $\mathbb R$.} 
If  $\frac{f(0)}{k}<1$ and $(0,0)$ is the only equilibrium point of the model given by~\eqref{model5}, then $(0,0)$ is globally stable.
\end{lemma}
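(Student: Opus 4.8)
The plan is to reduce the global statement to the Poincar\'e--Bendixson machinery, exploiting the planar structure of~\eqref{model5} together with the two facts already available: the absence of positive periodic orbits (Lemma~\ref{noperiodic}) and the local stability of the origin (Lemma~\ref{lemma_origem}). First I would transport the invariance of $\Omega$ from Lemma~\ref{lemma_invariant} into the $(I,R)$ coordinates: since $S=1-I-R$, the set $\Omega$ corresponds to the triangle $\Delta=\{(I,R): I\geq 0,\ R\geq 0,\ I+R\leq 1\}$, which is therefore compact and positively invariant for~\eqref{model5}. Consequently every trajectory starting in $\Delta$ remains in $\Delta$ and is bounded, so its $\omega$-limit set is a nonempty, compact, connected, invariant subset of $\Delta$.

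The core of the argument is the Poincar\'e--Bendixson theorem. Because the system is two-dimensional and the bounded trajectory stays in the compact set $\Delta$, which by hypothesis contains the single equilibrium $(0,0)$, the $\omega$-limit set must be one of three types: the equilibrium $(0,0)$ itself, a periodic orbit, or a graphic formed by $(0,0)$ together with a homoclinic orbit connecting it to itself. My goal is to eliminate the last two possibilities.

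To rule out periodic orbits I would first note that the line $I=0$ is invariant (there $\frac{dI}{d\tau}=0$ and $\frac{dR}{d\tau}=-R$), so a periodic orbit can neither cross it nor be contained in it; hence any periodic orbit would lie entirely in the open half-plane $I>0$, which is excluded by Lemma~\ref{noperiodic}. To rule out a homoclinic loop I would invoke Lemma~\ref{lemma_origem}: the hypothesis $\frac{f(0)}{k}<1$ makes both eigenvalues of $J(0,0)$ negative, so $(0,0)$ is a sink with no unstable manifold, and no trajectory can leave and return to it; therefore no homoclinic orbit based at $(0,0)$ can exist. (Alternatively, the Dulac function $\phi=\frac{1}{I}$ used in Lemma~\ref{noperiodic} already forbids any closed loop, periodic or homoclinic, lying in $I>0$.) With both alternatives excluded, the $\omega$-limit set of every trajectory reduces to $\{(0,0)\}$.

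It follows that every solution of~\eqref{model5} starting in $\Delta$ converges to $(0,0)$, i.e.\ $(0,0)$ is globally attracting in the biologically relevant region; combining this with the local stability already provided by Lemma~\ref{lemma_origem} yields global asymptotic stability. The main obstacle in this program is not the limit-set trichotomy itself but the careful treatment of the boundary of $\Delta$: one must verify that the invariant edge $I=0$ prevents boundary periodic orbits and that the sink structure of the origin excludes homoclinic connections, since Lemma~\ref{noperiodic} as stated only addresses orbits lying strictly in $I>0$.
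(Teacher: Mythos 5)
Your proposal is correct and follows essentially the same route as the paper: positive invariance of the triangle from Lemma~\ref{lemma_invariant}, the Poincar\'e--Bendixson alternative for the bounded planar trajectories, exclusion of periodic orbits via the Dulac argument of Lemma~\ref{noperiodic} together with the invariance of the axis $I=0$, and the local stability of the origin from Lemma~\ref{lemma_origem} to conclude convergence. The only difference is bookkeeping: the paper folds the homoclinic/graphic case into ``$(0,0)\in\omega(u^0)$'' and lets local asymptotic stability finish the argument, whereas you exclude such connections directly by observing that the origin is a sink with no unstable manifold; both are valid, and your explicit treatment of the boundary edge $I=0$ is, if anything, slightly cleaner than the paper's handling of a putative periodic orbit meeting that axis.
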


\begin{proof} Consider $Z=\{0\leq I\leq 1; 0\leq R \leq 1; I+R\leq 1\}$, and $X$ any open set on the plane such that $Z\subset X$. 
Because of Lemma~\ref{lemma_invariant}, any solution of~\eqref{model5} with initial conditions $u^0=(I(0),R(0))$ on $Z$, remains  bounded and the $\omega$-limit of $u_0$, $\omega(u^0)$, satisfies $\omega(u^0)\subset Z\subset X$. 
Because we are considering that $(0,0)$ is the only equilibrium point of~\eqref{model5}, from the Poincar\'e-Bendixson Theorem it follows that, for any initial condition $u^0=(I(0),R(0))\in Z$ we have that
\begin{enumerate}
\item $\omega(u^0)$ is a periodic orbit, or,
\item $(0,0)\in\omega(u^0)$.
\end{enumerate}

If $\omega(u^0)$ is a periodic orbit, Lemma~\ref{noperiodic} implies that~\eqref{model5} does not have periodic non-constant orbits with $I(t)>0$ for all $t$. 
Therefore,  if $\omega(u^0)$ is a periodic orbit, then the orbit must intercept the axis $I=0$. Equations~\eqref{model5} imply that, in this case, $I$ remains equal to zero and $R\to 0$ therefore $(I(t),R(t))\to (0,0)$. If $(0,0)\in\omega(u^0)$, then because $(0,0)$ is locally stable by Lemma~\ref{lemma_origem},  every solution that gets close enough, converges to $(0,0)$, so, in fact, in this case also $(I(t),R(t))\to (0,0)$.
\end{proof}

\section{Endemic Equilibrium}
\label{sec.Endemic}

\subsection{Characterization and Existence}
\label{subsec_characterization}
Now we consider the possibility of an endemic equilibrium point $(I^*,R^*)$, so $I^*>0$. 
Note that if $I^*>0$, then any endemic equilibrium point of~\eqref{model5} must satisfy the following equations:

\begin{equation}
f(R^*)\,(1-I^*-R^*)-k=0\quad \text{and}\quad (k-1)\, I^*-R^*=0;
\label{model5eq1}
\end{equation}
which can be rewritten in terms of $R^*$ as 

\begin{equation}
\label{model5eq2}
f(R^*)=\frac{k-1}{\frac{k-1}{k}-R^*}\quad \text{and}\quad I^*=\frac{1}{k-1}R^*.
\end{equation}

If we define the auxiliary function $g$  by

\begin{equation}\label{gfunction}
g(R)=\frac{k-1}{\frac{k-1}{k}-R},
\end{equation}
it is clear from~\eqref{model5eq2} that for the existence of endemic equilibrium, it is necessary that $f$ and $g$ intercept. 
In fact, it is possible to completely characterize the endemic equilibrium points of ~\eqref{model5} in terms of functions $f$ and $g$. 

\begin{theorem}
\label{theorema2}
Let $f$ be a positive function, differentiable on [0,1] and $g$  defined as in~\eqref{gfunction}. 
A point $(I^*,R^*)$ is an endemic equilibrium of~\eqref{model5} if and only if $R^*\in(0,\frac{k-1}{k})$, $I^*\in(0,\frac{1}{k})$, $I^*=\frac{1}{k-1}R^*$ and,  $f(R^*)=g(R^*)$. 
\end{theorem}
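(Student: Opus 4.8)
The plan is to prove the two implications of the equivalence separately. The forward direction largely repackages the equilibrium conditions already isolated in~\eqref{model5eq1} and~\eqref{model5eq2}, so the only genuinely new content there is pinning down the ranges of $I^*$ and $R^*$; the converse is a direct substitution check.

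For the forward implication, I would assume $(I^*,R^*)$ is an endemic equilibrium, so $I^*>0$ and both right-hand sides of~\eqref{model5} vanish. Setting $\frac{dR}{d\tau}=0$ gives $I^*=\frac{1}{k-1}R^*$ at once, and since $k>1$ and $I^*>0$ this forces $R^*>0$. Dividing $\frac{dI}{d\tau}=0$ by $I^*>0$ yields $f(R^*)(1-I^*-R^*)=k$; substituting $I^*=\frac{1}{k-1}R^*$ and solving for $f(R^*)$ reproduces exactly the expression defining $g$ in~\eqref{gfunction}, so $f(R^*)=g(R^*)$. This last step is a short manipulation that I would carry out by factoring $k$ out of the denominator. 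The ranges are where positivity of $f$ enters: from $f(R^*)>0$ and $f(R^*)(1-I^*-R^*)=k>0$ I conclude $1-I^*-R^*=k/f(R^*)>0$, i.e. $S^*=1-I^*-R^*>0$. Together with $R^*=(k-1)I^*$ this reads $1-kI^*>0$, giving $I^*\in(0,\frac{1}{k})$ and hence $R^*\in(0,\frac{k-1}{k})$.

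For the converse I would simply verify that the four listed conditions make both components of~\eqref{model5} vanish. The relation $I^*=\frac{1}{k-1}R^*$ gives $(k-1)I^*-R^*=0$ directly. For the first component, the hypothesis $R^*<\frac{k-1}{k}$ guarantees that $g(R^*)$ is well defined (nonzero denominator), and the identity $1-I^*-R^*=\frac{k}{k-1}\left(\frac{k-1}{k}-R^*\right)$ together with $f(R^*)=g(R^*)$ shows $f(R^*)(1-I^*-R^*)=k$, so the bracket in the first equation vanishes; since $I^*>0$, the point is an endemic equilibrium.

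There is no serious obstacle here, as the argument is entirely algebraic. The one point requiring care is conceptual rather than computational: the upper bounds $I^*<\frac{1}{k}$ and $R^*<\frac{k-1}{k}$ are not part of the equilibrium equations themselves but are forced by the standing assumption that $f$ is positive, which is precisely what keeps the equilibrium inside the biologically meaningful region with $S^*>0$. I would also stress that $R^*<\frac{k-1}{k}$ is exactly the condition under which $g$ is defined, so that the characterization $f(R^*)=g(R^*)$ is meaningful.
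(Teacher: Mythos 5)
Your proof is correct and follows essentially the same route as the paper: both rest on the algebraic equivalence between the equilibrium equations~\eqref{model5eq1} and the rewritten form~\eqref{model5eq2}, with the positivity of $f$ (equivalently, of $g$, or of $S^*=1-I^*-R^*$) forcing $R^*\in\left(0,\frac{k-1}{k}\right)$ and hence $I^*\in\left(0,\frac{1}{k}\right)$. The paper's proof is merely a compressed statement of exactly these steps, so your version differs only in spelling out the two implications explicitly.
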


\begin{proof}
The results follow from the equivalence between Eqs.~\eqref{model5eq1} and~\eqref{model5eq2},  the fact that $g(R)$ is positive only if  $R<\frac{k-1}{k}$, and that $R^*\in(0,\frac{k-1}{k})$ if and only if $I^*=\frac{1}{k-1}R^*\in(0,\frac{1}{k})$  because $k>1$.  
\end{proof}

Theorem~\ref{theorema2} establishes that endemic equilibrium points occur if and only if the functions $f$ and $g$ intercept each other on $(0,\frac{k-1}{k})$. 
The next corollary establishes a simple condition to ensure that this interception will occur.

\begin{theorem} \label{existence} Let $f$ be a positive function on $\mathbb R$, differentiable $[0,1]$. 
If $f(R)>g(R)$ for some $R\in [0,\frac{k-1}{k})$ then~\eqref{model5} has at least one endemic equilibrium point $(I^*,R^*)$,  with $R^*\in(R,\frac{k-1}{k})$ and $I^*\in(\frac{R}{k-1},\frac{1}{k})$. 
In particular, if $f(0)>k$, there exists at least one endemic equilibrium.
\end{theorem}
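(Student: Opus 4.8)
The plan is to reduce everything to an application of the Intermediate Value Theorem, combined with the characterization already established in Theorem~\ref{theorema2}. By that theorem, endemic equilibria of~\eqref{model5} are in bijection with the intersection points of $f$ and $g$ in the open interval $\left(0,\frac{k-1}{k}\right)$, with the corresponding $I^*$ recovered from $R^*$ via $I^*=\frac{1}{k-1}R^*$. So it suffices to produce a point $R^*\in\left(R,\frac{k-1}{k}\right)$ satisfying $f(R^*)=g(R^*)$; the claimed ranges for $R^*$ and $I^*$ will then follow immediately from the relation $I^*=\frac{R^*}{k-1}$ and monotonicity of that linear map.

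First I would record the qualitative behavior of the auxiliary function $g$ from~\eqref{gfunction} on $\left[0,\frac{k-1}{k}\right)$. Since $k>1$, the denominator $\frac{k-1}{k}-R$ is positive and strictly decreasing to $0$ as $R\to\left(\frac{k-1}{k}\right)^-$, so $g$ is continuous, strictly increasing, with $g(0)=k$ and $g(R)\to+\infty$ at the right endpoint. Next I would observe that $f$ stays bounded on this interval: because $\frac{k-1}{k}=1-\frac{1}{k}<1$, the interval $\left[R,\frac{k-1}{k}\right]$ is a compact subset of $[0,1]$, and $f$, being differentiable hence continuous there, attains a finite maximum.

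Then I would define $h=f-g$ on $\left[R,\frac{k-1}{k}\right)$, which is continuous. The hypothesis gives $h(R)=f(R)-g(R)>0$, while the blow-up of $g$ against the boundedness of $f$ gives $h(r)\to-\infty$ as $r\to\left(\frac{k-1}{k}\right)^-$; in particular $h$ takes a strictly negative value somewhere in the interval. By the Intermediate Value Theorem there exists $R^*\in\left(R,\frac{k-1}{k}\right)$ with $h(R^*)=0$, i.e.\ $f(R^*)=g(R^*)$. Setting $I^*=\frac{1}{k-1}R^*$ and invoking Theorem~\ref{theorema2} yields the desired endemic equilibrium, and since $R^*\in\left(R,\frac{k-1}{k}\right)$ the linear relation gives $I^*\in\left(\frac{R}{k-1},\frac{1}{k}\right)$.

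Finally, the last sentence is just the special case $R=0$: the hypothesis $f(0)>k$ reads precisely as $f(0)>g(0)$, so the argument above applies with $R=0$ and produces an endemic equilibrium. I do not anticipate a genuine obstacle here; the only point requiring a little care is setting up the IVT cleanly, namely justifying that $f$ remains bounded (which uses $\frac{k-1}{k}<1$ so that continuity extends up to a point inside $[0,1]$) while $g$ diverges, so that $h$ is guaranteed to change sign.
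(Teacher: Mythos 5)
Your proposal is correct and follows essentially the same route as the paper: both reduce the statement via Theorem~\ref{theorema2} to finding a root of $h=f-g$ on $\left(R,\frac{k-1}{k}\right)$, using $h(R)>0$, the divergence $g(R)\to+\infty$ as $R\to\left(\frac{k-1}{k}\right)^-$ against the boundedness of $f$, and the Intermediate Value Theorem (which the paper invokes under the name ``Mean Value Theorem''), then set $I^*=\frac{R^*}{k-1}$ and handle the last claim via $g(0)=k$. Your explicit justification that $f$ stays bounded near $\frac{k-1}{k}$ is a slightly more careful rendering of a step the paper passes over quickly.
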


\begin{proof}
Consider the function $h=f-g$. 
Note that, because $f$ and $g$ are continuous on $[0,\frac{k-1}{k})$, $h$ is also continuous on $[0,\frac{k-1}{k})$. 
According to Theorem~\ref{theorema2}, to obtain the desired result, we must prove that $h$ has at least one root on $(R,\frac{k-1}{k})$. 
If $f(R)>g(R)$ for some $R\in [0,\frac{k-1}{k})$ then $h(R)>0$, and by the hypothesis on $f$ and the definition of $g$ we have $\displaystyle\lim_{R\to\left(\frac{k-1}{k}\right)^-} h(R)=f\left(\frac{k-1}{k}\right)-\lim_{R\to\left(\frac{k-1}{k}\right)^-}g(R)=-\infty$. 
From the Mean Value Theorem and the continuity of $h$, the previous statements imply that $h$ has at least one root $R^*\in (R,\frac{k-1}{k})$. 
By making $I^*=\frac{R^*}{k-1}$ we obtain the desired equilibrium point as $(I^*,R^*)$. 
The final statement follows from the fact that $g(0)=k$.
\end{proof}

\subsection{Local Stability of Endemic Equilibrium}
\label{subsec_localstability}

Theorems~\ref{theorema2} and~\ref{existence} establish conditions to verify the existence of endemic equilibrium points in terms of functions $f$ and $g$. 
The next theorem shows that the relationship between the derivatives of $f$ and $g$ can be used to classify the local stability of the endemic equilibrium obtained. 
Note that for all $R\not=\frac{k-1}{k}$, $g$ satisfies  $\dfrac{dg}{dR}=\dfrac{1}{k-1}g^2(R)$.
 
\begin{theorem}\label{theorema1} Let $f$ be a positive function, differentiable on $[0,1]$; $g$ defined as in~\eqref{gfunction}; and $(I^*,R^*)$  an endemic equilibrium point of~\eqref{model5}. If 
\begin{equation}
\frac{df}{dR}(R^*)<\frac{dg}{dR}(R^*) \left(\text{ or } <\frac{1}{k-1}g^2(R^*) \text{ or } <\frac{1}{k-1}f^2(R^*)\right),
\label{diffineq}
\end{equation}
 then $(I^*,R^*)$ is a locally stable equilibrium point. If 
 \begin{equation}
\frac{df}{dR}(R^*)>\frac{dg}{dR}(R^*) \left(\text{ or } >\frac{1}{k-1}g^2(R^*) \text{ or } >\frac{1}{k-1}f^2(R^*)\right),
\label{diffineqgeq}
\end{equation}
then $(I^*,R^*)$ is a locally saddle point.
\end{theorem}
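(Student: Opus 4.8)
The plan is to linearize~\eqref{model5} at the endemic equilibrium $(I^*,R^*)$ and invoke the standard sign criterion for planar systems: a hyperbolic fixed point is locally asymptotically stable when its Jacobian has negative trace and positive determinant, and is a saddle when the determinant is negative. The entire argument therefore reduces to computing the trace and determinant of $J(I^*,R^*)$ from~\eqref{jacobian} and reading off their signs.

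First I would simplify the entries of~\eqref{jacobian} using the equilibrium relations~\eqref{model5eq1}--\eqref{model5eq2}. The two facts I need are $f(R^*)(1-I^*-R^*)-k=0$ and $1-I^*-R^*=k/f(R^*)$. The former collapses the $(1,1)$ entry to $-I^*f(R^*)$, while the latter rewrites the $(1,2)$ entry as $I^*\bigl[\frac{k}{f(R^*)}\frac{df}{dR}(R^*)-f(R^*)\bigr]$. The lower row $(k-1,\,-1)$ is constant. The trace is then $-I^*f(R^*)-1$, which is strictly negative at every endemic equilibrium since $I^*>0$ and $f>0$; this settles the trace condition once and for all, so stability versus saddle is decided entirely by the sign of the determinant.

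Expanding the determinant and factoring out the strictly positive quantity $I^*k/f(R^*)$, I expect to arrive at
$$\det J(I^*,R^*) = I^*\frac{k}{f(R^*)}\Bigl[f^2(R^*)-(k-1)\frac{df}{dR}(R^*)\Bigr],$$
so that the sign of the determinant equals the sign of $f^2(R^*)-(k-1)\frac{df}{dR}(R^*)$; equivalently, the determinant is positive exactly when $\frac{df}{dR}(R^*)<\frac{1}{k-1}f^2(R^*)$. It then remains to reconcile this with the three equivalent inequalities in~\eqref{diffineq}--\eqref{diffineqgeq}. Since $f(R^*)=g(R^*)$ at any endemic equilibrium by Theorem~\ref{theorema2}, we have $f^2(R^*)=g^2(R^*)$, and the identity $\frac{dg}{dR}=\frac{1}{k-1}g^2$ noted just before the theorem converts $\frac{1}{k-1}f^2(R^*)$ into $\frac{dg}{dR}(R^*)$; hence all three forms of the threshold coincide. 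Consequently~\eqref{diffineq} yields a positive determinant and, together with the negative trace, local stability, whereas~\eqref{diffineqgeq} yields a negative determinant and a saddle.

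The only delicate step is the algebraic simplification and factoring of the determinant: the computation is routine, but the bookkeeping must be exact for the clean single-term sign condition to emerge, and the substitution $1-I^*-R^*=k/f(R^*)$ has to be applied consistently in both off-diagonal contributions. Once the determinant is in the factored form above, everything else is an immediate application of the planar trace--determinant criterion and of the equilibrium identity $f(R^*)=g(R^*)$.
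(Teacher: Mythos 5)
Your proposal is correct and follows essentially the same route as the paper's proof: simplify the Jacobian~\eqref{jacobian} at $(I^*,R^*)$ via the equilibrium relations, note the trace is automatically negative since $f>0$ and $I^*>0$, reduce everything to the sign of the determinant (your factored form $\det J(I^*,R^*)=I^*\frac{k}{f(R^*)}\bigl[f^2(R^*)-(k-1)\frac{df}{dR}(R^*)\bigr]$ checks out and matches the paper's inequality chain), and use $f(R^*)=g(R^*)$ together with $\frac{dg}{dR}=\frac{1}{k-1}g^2$ to identify the three equivalent thresholds. The paper phrases the stability criterion as Routh--Hurwitz (positive coefficients of the characteristic polynomial) rather than the trace--determinant test, but for a planar system these are the same criterion.
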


\begin{proof}
If $(I^*,R^*)$ is an endemic equilibrium point of~\eqref{model5}, then, from Theorem~\ref{theorema2}, we have that $f(R^*)=g(R^*)$. 
Because $\dfrac{dg}{dR}=\dfrac{1}{k-1}g^2(R)$, we have that $\frac{dg}{dR}(R^*)=\frac{1}{k-1}g^2(R^*)=\frac{1}{k-1}f^2(R^*)$. 
Therefore, to obtain the desired result, we can use any of these three equivalent expressions. 
We will use $\frac{1}{k-1}f^2(R^*)$.

Since $k>0$, the first equation in~\eqref{model5eq1} implies that $1-I^*-R^*\not=0$ and $f(R^*)\not=0$. 
Using~\eqref{model5eq1}, we have that the Jacobian matrix~\eqref{jacobian} evaluated on $(I^*,R^*)$ is given by:

 \begin{equation} 
J(I^*,R^*) = 
\begin{bmatrix} 
 \displaystyle -I^*f(R^*)&I^*\left[\frac{df}{dR}(R^*)\frac{k}{f(R^*)}-f(R^*)\right]\\
(k-1)&-1
\end{bmatrix}.
\label{jacobianeq}
\end{equation}

By the Routh-Hurwitz criterion, in order to prove the local stability of $(I^*,R^*)$, it would be sufficient to show that the characteristic polynomial of the Jacobian matrix~\eqref{jacobianeq} has positive coefficients. 
Therefore, it would be sufficient to prove that:

\begin{align}
 \label{aux1} \text{(Trace):  }& I^*f(R^*)+1>0.\\
 \label{aux2} \text{(Determinant):  }& I^*f(R^*)-(k-1)I^*\left[\displaystyle\frac{df}{dR}(R^*)\frac{k}{f(R^*)}-f(R^*)\right]>0.
\end{align}

Inequality~\eqref{aux1} is satisfied because we are considering $f$ as a positive function. 
As $I^*\not=0$, the inequality given by~\eqref{aux2} is equivalent to the following inequalities:

\begin{align*}
\displaystyle 0&<f(R^*)-(k-1)\left[\frac{df}{dR}(R^*)\frac{k}{f(R^*)}-f(R^*)\right],\\
\displaystyle 0&<kf(R^*)-(k-1)\frac{df}{dR}(R^*)\frac{k}{f(R^*)},\\
\displaystyle	(k-1)\frac{df}{dR}(R^*)\frac{k}{f(R^*)}&<kf(R^*),\\
\displaystyle	\frac{df}{dR}(R^*)&<\frac{1}{k-1}f^2(R^*).
\end{align*}
	
Hence, if $\dfrac{df}{dR}(R^*)<\dfrac{1}{k-1}f^2(R^*)$, the characteristic polynomial of matrix~\eqref{jacobianeq} has only positive coefficients, which implies that both eigenvalues have a negative real part. 
So, $(I^*,R^*)$ is a locally stable equilibrium point.

Similarly, if $\frac{df}{dR}(R^*)>\frac{1}{k-1}f^2(R^*)$, then matrix~\eqref{jacobianeq} has a negative determinant, and its characteristic polynomial has the form $\lambda^2+b\lambda-c$ with $b,c>0$. 
This implies that matrix~\eqref{jacobianeq}  has one positive and one negative real eigenvalue and, therefore, $(I^*,R^*)$ is a locally saddle point.
\end{proof}

\begin{corollary}\label{decreasing} 
Let $f$ be a positive function, differentiable on $[0,1]$ and let $(I^*,R^*)$ be an endemic equilibrium point of~\eqref{model5} such that $\frac{df}{dR}(R^*)\leq 0 $. 
Then $(I^*,R^*)$ is locally stable. 
\end{corollary}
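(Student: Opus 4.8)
The plan is to deduce this directly from the stability half of Theorem~\ref{theorema1}, which already supplies a sufficient condition for an endemic equilibrium to be locally stable. Recall that Theorem~\ref{theorema1} guarantees local stability of $(I^*,R^*)$ whenever the derivative condition~\eqref{diffineq} holds, and that condition is stated in three equivalent forms. First I would select the cleanest of these to work with, namely $\frac{df}{dR}(R^*) < \frac{1}{k-1} f^2(R^*)$, since it is phrased entirely in terms of $f$ and avoids any separate appeal to $g$ or its derivative.

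The key observation — and essentially the only substantive step — is that the right-hand side of this inequality is strictly positive. Indeed, $f$ is assumed positive, so $f^2(R^*) > 0$, and the model parameters satisfy $k > 1$, whence $k-1 > 0$; together these give $\frac{1}{k-1} f^2(R^*) > 0$. This positivity is exactly where the hypotheses ``$f$ positive'' and ``$k>1$'' are used.

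With this in hand, the hypothesis $\frac{df}{dR}(R^*) \leq 0$ immediately yields the chain $\frac{df}{dR}(R^*) \leq 0 < \frac{1}{k-1} f^2(R^*)$, so inequality~\eqref{diffineq} is satisfied; applying Theorem~\ref{theorema1} then gives that $(I^*,R^*)$ is locally stable. The hard part here is genuinely minimal: the corollary amounts to the remark that a nonpositive quantity is automatically less than a strictly positive one, so there is no real obstacle beyond recording that $\frac{1}{k-1} f^2(R^*)$ is strictly positive. The only care needed is to invoke the correct (third) form of~\eqref{diffineq} so that the argument does not accidentally require knowing the sign of $\frac{dg}{dR}(R^*)$, which is guaranteed to equal the same value only because $f(R^*)=g(R^*)$ at an endemic equilibrium.
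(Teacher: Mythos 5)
Your proposal is correct and matches the paper's intent exactly: the corollary is stated in the paper without a written proof precisely because it follows from Theorem~\ref{theorema1} by the observation you make, namely that $f$ positive and $k>1$ force $\frac{1}{k-1}f^2(R^*)>0$, so $\frac{df}{dR}(R^*)\leq 0$ automatically satisfies the strict inequality~\eqref{diffineq}. Your care in choosing the form of the condition phrased purely in terms of $f$ is sensible, though any of the three equivalent forms would work since they coincide at an endemic equilibrium.
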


\subsection{Multiple Endemic Equilibrium and local Multistability}
\label{Multistability}

Theorem~\ref{theorema2} implies that multiple endemic equilibrium points occur if $f$ and $g$ have multiple interception points on $(0,\frac{k}{k-1})$. 
The following result shows, that under some conditions, the existence of one endemic equilibrium implies the existence of another one.

\begin{proposition}\label{multy} 
Let $f$ be a positive function, differentiable on $[0,1]$; $g$ defined as in~\eqref{gfunction}; and $(I^*,R^*)$ and endemic equilibrium of ~\eqref{model5} such that $\frac{df}{dR}(R^*)\not=\frac{dg}{dR}(R^*)$. 
Then $(I^*,R^*)$ is locally stable or there exists another endemic equilibrium point $(\overline{I}^*,\overline{R}^*)$ with $\overline{R}^*>R^*$.
\end{proposition}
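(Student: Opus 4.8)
The plan is to split on the local stability classification already furnished by Theorem~\ref{theorema1}, handling one case in a single line and reducing the other to the interception argument of Theorem~\ref{existence}. Since the hypothesis $\frac{df}{dR}(R^*)\neq\frac{dg}{dR}(R^*)$ rules out equality, exactly one of the strict inequalities in~\eqref{diffineq}--\eqref{diffineqgeq} holds. If $\frac{df}{dR}(R^*)<\frac{dg}{dR}(R^*)$, then Theorem~\ref{theorema1} immediately gives that $(I^*,R^*)$ is locally stable, and the first alternative of the conclusion holds.

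It remains to treat the complementary case $\frac{df}{dR}(R^*)>\frac{dg}{dR}(R^*)$, in which Theorem~\ref{theorema1} tells us $(I^*,R^*)$ is a saddle point, hence not locally stable; so here I must exhibit a second endemic equilibrium with larger recovery coordinate. The geometric picture driving the proof is that at such a saddle the difference $h=f-g$ passes through zero while increasing, so $f$ crosses $g$ from below to above at $R^*$; since $g$ blows up at the right endpoint while $f$ stays bounded, a second crossing is then unavoidable. Concretely, on $[0,\frac{k-1}{k})$ set $h=f-g$. By Theorem~\ref{theorema2} we have $h(R^*)=0$, and the case assumption gives $h'(R^*)=\frac{df}{dR}(R^*)-\frac{dg}{dR}(R^*)>0$.

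From $h(R^*)=0$ and $h'(R^*)>0$ I would deduce $h(R)>0$ for all $R$ in some right-neighborhood $(R^*,R^*+\varepsilon)$, and fix a point $R_1$ in that interval with $f(R_1)>g(R_1)$ and $R_1\in[0,\frac{k-1}{k})$. With such an $R_1$ in hand, Theorem~\ref{existence} applies verbatim: it produces an endemic equilibrium $(\overline{I}^*,\overline{R}^*)$ with $\overline{R}^*\in(R_1,\frac{k-1}{k})$, whence $\overline{R}^*>R_1>R^*$, as required. (Equivalently, one can argue in place: $h$ is continuous on $[0,\frac{k-1}{k})$, is positive at $R_1$, and satisfies $\lim_{R\to(\frac{k-1}{k})^-}h(R)=-\infty$ since $g\to+\infty$ while $f$ is bounded on $[0,1]$, so the Intermediate Value Theorem yields a root $\overline{R}^*\in(R_1,\frac{k-1}{k})$, and Theorem~\ref{theorema2} converts it into the desired equilibrium.)

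I expect the only step requiring care to be the passage from $h'(R^*)>0$ to strict positivity of $h$ on a right-neighborhood of $R^*$: this rests solely on differentiability of $h$ at $R^*$ together with $h(R^*)=0$, since $\frac{h(R)}{R-R^*}\to h'(R^*)>0$ forces $h(R)>0$ for $R>R^*$ sufficiently close, but it deserves an explicit sentence because $h$ need not be monotone near $R^*$. Beyond that local sign computation, the proof is a direct assembly of Theorems~\ref{theorema1},~\ref{theorema2}, and~\ref{existence}.
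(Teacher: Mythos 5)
Your proposal is correct and follows essentially the same route as the paper: split on the sign of $\frac{df}{dR}(R^*)-\frac{dg}{dR}(R^*)$ via Theorem~\ref{theorema1}, and in the saddle case use the sign of $h=f-g$ just to the right of $R^*$ to invoke Theorem~\ref{existence}. The only difference is that you spell out, via the difference quotient, why $h'(R^*)>0$ and $h(R^*)=0$ force $f>g$ on a right-neighborhood of $R^*$ --- a step the paper asserts in one phrase without justification --- so your write-up is, if anything, slightly more complete.
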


\begin{proof}
Because $(I^*,R^*)$ is an endemic equilibrium of~\eqref{model5}, we have $f(R^*)=g(R^*)$ and, because $\frac{df}{dR}(R^*)\not=\frac{dg}{dR}(R^*)$, then by Theorem~\ref{theorema1} either is locally stable if $\frac{df}{dR}(R^*)<\frac{dg}{dR}(R^*)$ or, locally unstable  if $\frac{df}{dR}(R^*)>\frac{dg}{dR}(R^*)$. 
In the last case, for values of $R$ slightly bigger than $R^*$, the function $f$ will be greater than $g$ so Theorem~\ref{existence} implies the existence of at least one endemic equilibrium point $(\overline{I}^*,\overline{R}^*)$ with $\overline{R}^*>R^*$.
\end{proof}

The following example illustrates a situation with multiple unstable and stable endemic equilibrium points.

\begin{ex}\label{example1}
Let $n$ be a fixed positive integer, $R^*_i~=~i\left(\dfrac{k-1}{k}\right)\dfrac{1}{2n}$  for $i=0, 1, \ldots, 2n-1$ and $g(R)=\frac{k-1}{\frac{k-1}{k}-R}$. 
Let $f$ be a positive and differentiable function on $[0,1]$ such that, $f(0)<k$ and $f(R) = g(R) - \sin\left(2n\pi\frac{k}{k-1}R\right)$ for $R \in [R^*_1,R^*_{2n-1}]$ (Figure~\ref{fig1}).
Note that 
$
f(R^*_i) = g(R^*_i) - \sin\left(i\pi\right) = g(R^*_i).
$
Therefore, Theorem~\ref{theorema2} implies that, in this case, model~\eqref{model5} has at least $2n$ equilibrium points given by $(R_i^*,\frac{1}{k-1}R_i^*)$.
Furthermore, 
$$
\frac{df}{dR}(R^*_i) = \frac{dg}{dR}(R^*_i) - \frac{2n\pi k}{k-1}\cos\left(i\pi\right).
$$
Hence, for $i = 2, 4, \ldots, 2n-2$, we have $\frac{df}{dR}(R^*_i) = \frac{dg}{dR}(R^*_i) - \frac{2n\pi k}{k-1} < \frac{dg}{dR}(R^*_i)$ and, from Theorem~\ref{theorema1},  these $n-1$ equilibrium points are locally stable. 
On the other hand, for $i = 1, 3, \ldots, 2n-1$, we have $\frac{df}{dR}(R^*_i) = \frac{dg}{dR}(R^*_i) + \frac{2n\pi k}{k-1} > \frac{dg}{dR}(R^*_i)$; which, by Theorem~\ref{theorema1}, implies that these $n$ equilibrium points are locally unstable saddle points. 
Since $f(0) < k$,  Lemma~\ref{lemma_origem} implies the local stability of equilibrium point $(0,0)$. 
This alternation between stable and unstable saddle points is illustrated in Figure~\ref{fig1}. 
\end{ex}

Note that in Example~\ref{example1}, the \textit{generalized} variable reproduction number $\frac{f(R)}{k}$ may attain some values less than $1$ and, in a similar fashion as in the backward bifurcation phenomenon~\cite{gumel2012causes,zhang2019backward}, the disease-free equilibrium  co-exists with several endemic locally stable equilibrium points.  
{\color{black} The multistability phenomenon, i.e., the coexistence of different stable equilibrium points for a given set of parameters, has been the focus of a lot of research in the applied dynamical systems community. 
In multistable systems, the asymptotic  behavior depends crucially on the initial conditions. 
For an overview of instances of multistability across different areas, and an extensive list of references see~\cite{PISARCHIK2014167}.}


\begin{figure}[ht!]
\centering
	\begin{minipage}{1\linewidth}
	\centering
		\subfigure[]{
			\centering
			\includegraphics[width=8.9cm]{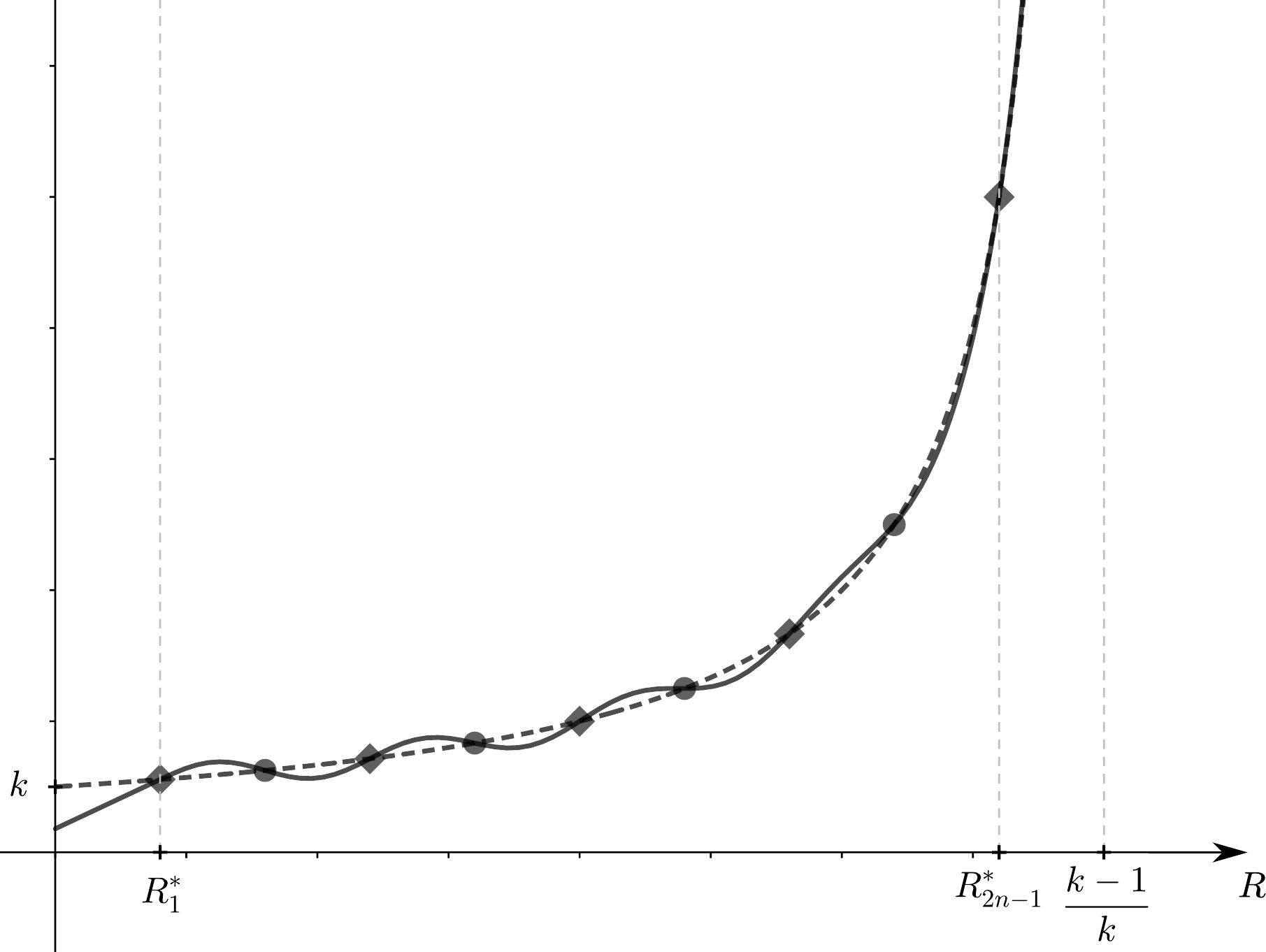}
		\label{fig1a}
		}
	\end{minipage}
		
	\vspace{0.5cm}
	
	\begin{minipage}{1\linewidth}
	\centering
		\subfigure[]{
			\centering
			\includegraphics[width=8.9cm]{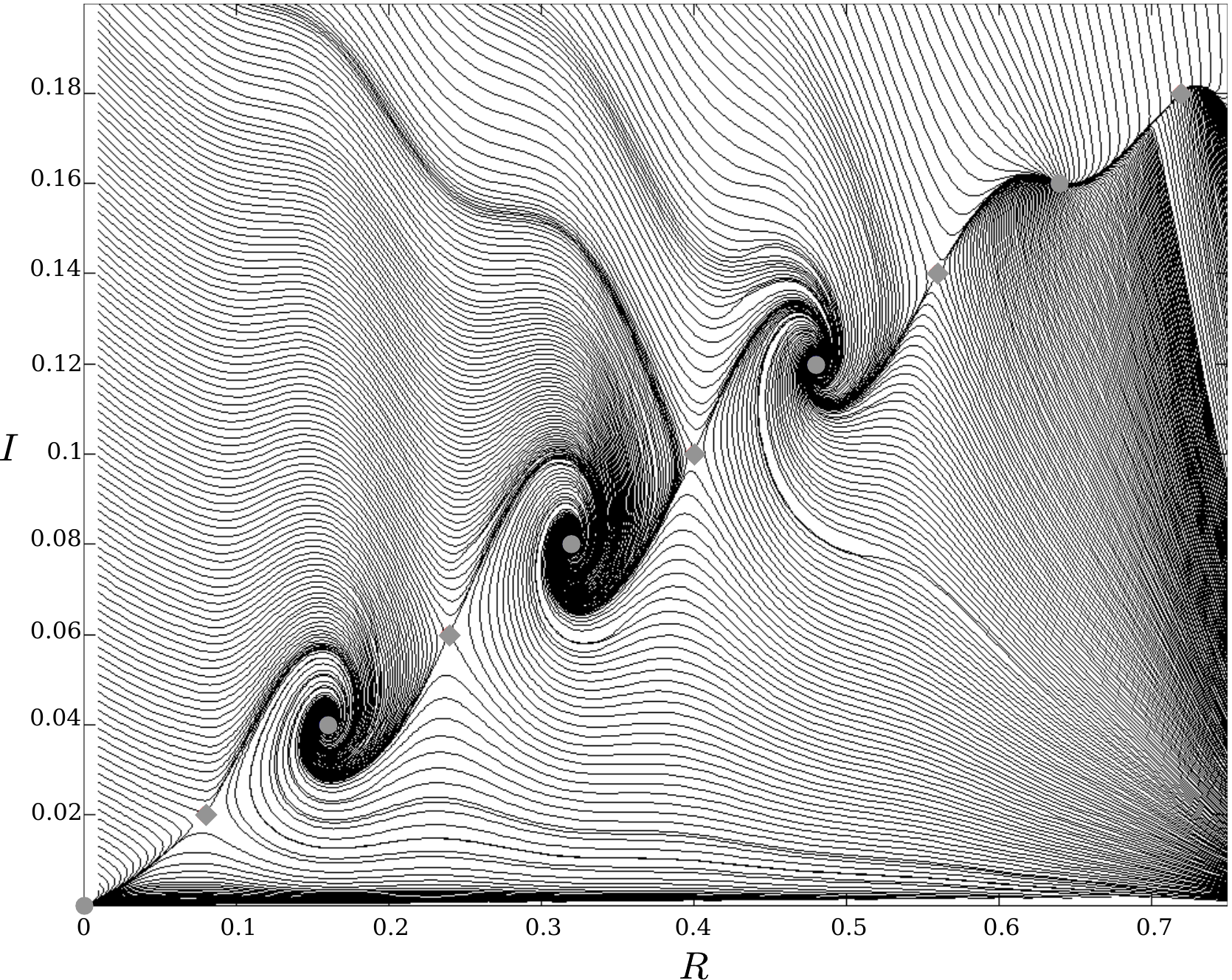}
		\label{fig1b}
		}
	\end{minipage}
	\caption{{\bf Multistability}\label{fig1}. 
	Consider $n=5$ and $k=5$ in Example~\ref{example1}. Diamond markers correspond to saddle equilibrium points and circle markers correspond to locally stable equilibrium points. 
	Functions $f$ (dashed line) and $g$ (solid line) are pictured on {\color{black} the Figure~\ref{fig1a}.
    On Figure~\ref{fig1b}, the partial phase plane $R\times I$ for~\eqref{model5} is pictured.}}
\end{figure}

\subsection{Uniqueness of Endemic Equilibrium and Global Stability}
\label{subsec_unique_global}

In this final subsection, we consider the possibility of global stability for endemic equilibrium. 
Clearly, this is only possible if there exists only one locally stable endemic equilibrium. 
The next result establishes a sufficient condition to guarantee such a situation.

\begin{proposition}\label{unique}
Let $f$ be a positive differentiable function on $[0,1]$ such that $f$ is constant or $f$ is non-increasing .
If $\frac{f(0)}{k}<1$ then~\eqref{model5} has no endemic equilibrium points. 
If $\frac{f(0)}{k}>1$ then there exists a unique endemic equilibrium point for~\eqref{model5}. Furthermore, the endemic equilibrium point is locally stable.
\end{proposition}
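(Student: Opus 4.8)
The plan is to reduce both assertions to the behavior of the single difference function $h = f - g$ on the half-open interval $[0,\frac{k-1}{k})$, exploiting the monotonicity built into the hypotheses together with the identity $\frac{dg}{dR} = \frac{1}{k-1}g^2(R) > 0$ already noted before Theorem~\ref{theorema1}. By Theorem~\ref{theorema2}, endemic equilibria are in exact correspondence with the zeros of $h$ on $(0,\frac{k-1}{k})$, so counting endemic equilibria amounts to counting roots of $h$.

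First I would dispose of the non-existence case. Assume $\frac{f(0)}{k} < 1$, i.e.\ $f(0) < k = g(0)$. Since $f$ is constant or non-increasing, $f(R) \le f(0) < k$ for every $R \in [0,\frac{k-1}{k})$, while $g$ is strictly increasing there with $g(R) \ge g(0) = k$. Hence $f(R) < g(R)$ throughout the interval, so $h$ has no zero and, by Theorem~\ref{theorema2}, there is no endemic equilibrium point.

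Next, for the case $\frac{f(0)}{k} > 1$: existence is immediate, since $f(0) > k = g(0)$ is precisely the hypothesis under which the final statement of Theorem~\ref{existence} produces at least one endemic equilibrium. For uniqueness I would differentiate $h$: because $\frac{df}{dR} \le 0$ (as $f$ is constant or non-increasing) and $\frac{dg}{dR} = \frac{1}{k-1}g^2(R) > 0$ on $(0,\frac{k-1}{k})$, the derivative $\frac{dh}{dR} = \frac{df}{dR} - \frac{dg}{dR}$ is strictly negative. Thus $h$ is strictly decreasing, and a strictly decreasing continuous function admits at most one zero; together with existence this pins down a single endemic equilibrium $(I^*,R^*)$.

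Finally, local stability follows immediately: at the equilibrium we have $\frac{df}{dR}(R^*) \le 0$, so Corollary~\ref{decreasing} applies directly (equivalently, the first branch of Theorem~\ref{theorema1}, since $\frac{df}{dR}(R^*) \le 0 < \frac{dg}{dR}(R^*)$), giving that $(I^*,R^*)$ is locally stable. No serious obstacle arises here; the only point demanding a little care is the boundary value at $R = 0$, where one must use $g(0) = k$ together with the weak inequality $f(R) \le f(0)$ to maintain the strict separation $f < g$ across the entire half-open interval in the non-existence case.
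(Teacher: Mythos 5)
Your proposal is correct and follows essentially the same route as the paper's own proof: both reduce the problem to the zeros of $h=f-g$, use the strict monotonicity of $h$ (from $g$ strictly increasing and $f$ non-increasing) to rule out roots when $f(0)<k$ and to force uniqueness when $f(0)>k$, invoke Theorem~\ref{existence} for existence, and conclude local stability from Corollary~\ref{decreasing}. The only cosmetic difference is that you verify $f<g$ pointwise in the non-existence case, whereas the paper notes $h(0)<0$ and uses the strict decrease of $h$; these are the same argument.
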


\begin{proof}

Consider as in Theorem~\ref{existence}, that $h=f-g$ with $g(R)=\frac{k-1}{\frac{k-1}{k}-R}$. 
Note that because $g$ is strictly increasing and $f$ is constant or non-increasing, the function $h$ is strictly decreasing. 
Note also that $g(0)=k$, so if $\frac{f(0)}{k}<1$ then $f(0)<g(0)$ and therefore $h(0)<0$. 
Because $h$ is strictly decreasing, $h$ has no roots. 
By Theorem~\ref{theorema2}, this means that there are no endemic equilibrium points for~\eqref{model5}.

If $\frac{f(0)}{k}>1$, then, by Theorem~\ref{existence}, the system~\eqref{model5} has at least one endemic equilibrium $(I^*,R^*)$ and therefore $h$ has at least one root $R^*\in (0,\frac{k-1}{k})$. Since $h$ is strictly decreasing, the root must be unique, so the endemic equilibrium is also unique. 
The local stability follows from Corollary~\ref{decreasing}. 
\end{proof}

The conditions for $f$ in Proposition~\ref{unique}, are not necessary for the uniqueness of a locally stable endemic equilibrium point, because it is possible to find a positive and strictly increasing function $f$ that intercepts $g$ only once, as illustrated in the following example.

\begin{ex}\label{example2}
Let $f:[0,1] \to \mathbb R$ be given by $f(R) = kR^2 + 2k$. 
Note that $f$ is a positive, differentiable, and strictly increasing function on $[0,1]$.
Furthermore, $f(R) = g(R)$ has a unique solution $R^* \in \left[0,\frac{k-1}{k}\right)$, which corresponds to a locally stable endemic equilibrium point (Figure~\ref{fig2}).


\begin{figure}[ht!]
\centering
	\begin{minipage}{\linewidth}
	\centering
		\subfigure[]{
			\centering
			\includegraphics[width=8.9cm]{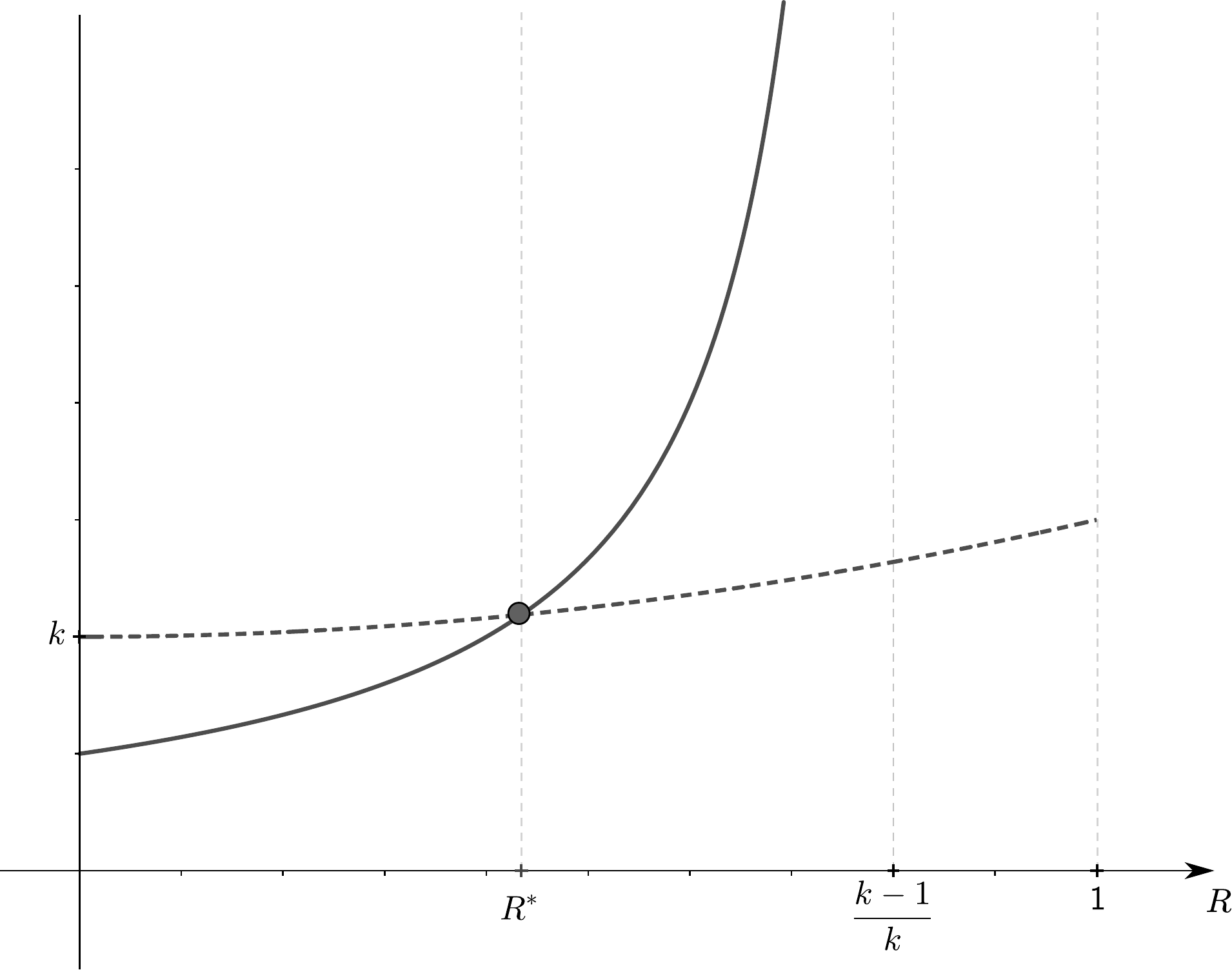}
		\label{fig2a}
		}
	\end{minipage}
	
	\vspace{0.5cm}
	
	\begin{minipage}{\linewidth}
	\centering
		\subfigure[]{
			\centering
			\includegraphics[width=8.9cm]{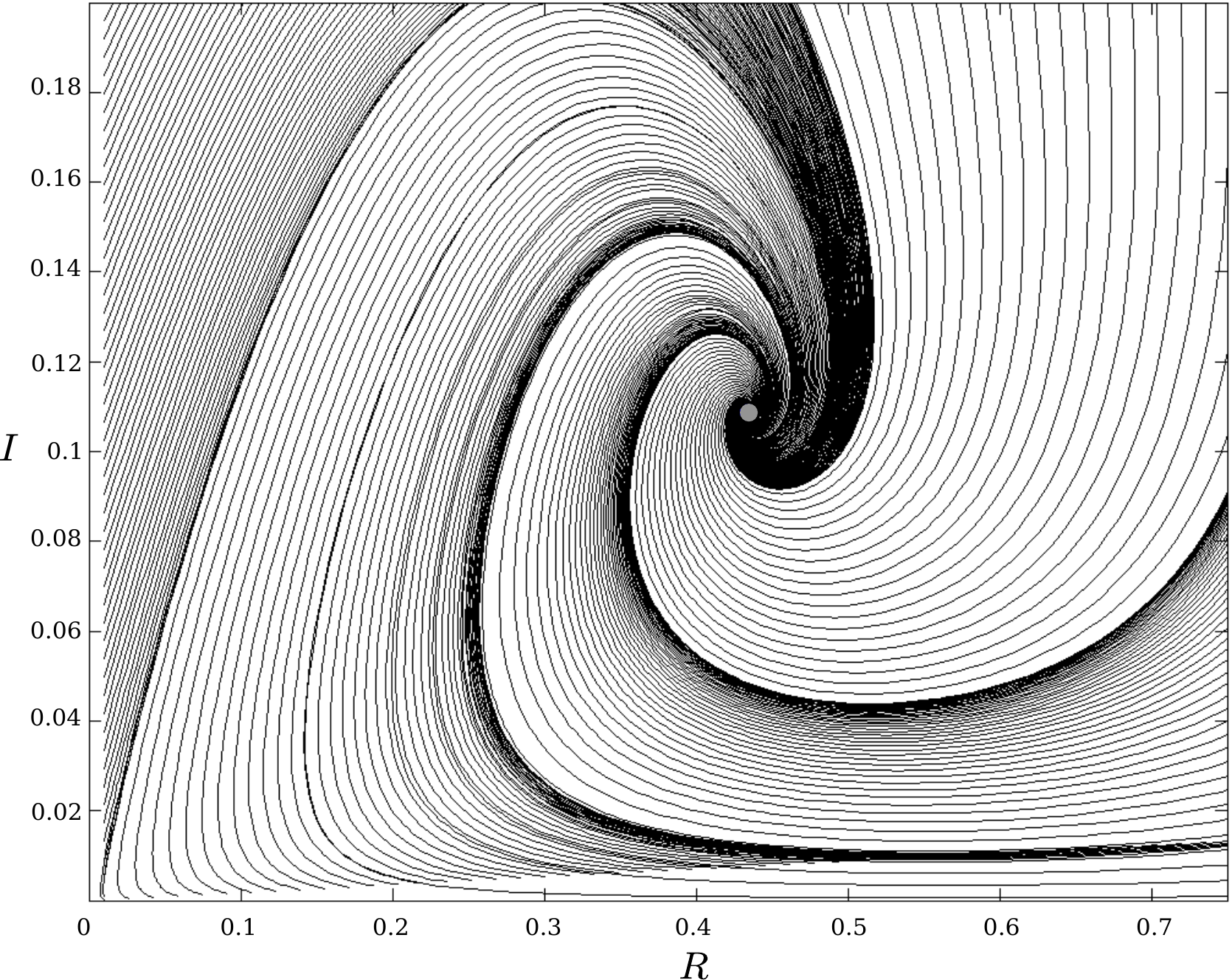}
		\label{fig2b}
		}
	\end{minipage}
	\caption{{\bf Unique and stable endemic equilibrium point}\label{fig2}. Consider $k=5$ in  Example~\ref{example2}. Functions $f$ (dashed line) and $g$ (solid line) are pictured on {\color{black} the Figure~\ref{fig2a}. 
    On Figure~\ref{fig2b}, the partial phase plane $R\times I$ for~\eqref{model5} is pictured. 
    The gray dot corresponds to the unique stable equilibrium point.}}
\end{figure}

\end{ex}

In the previous example, the partial phase-plane presented suggests that the endemic equilibrium is not only locally stable, but also  globally stable for initial conditions with $I(0)>0$. 
This, in fact, is true as a consequence of the following theorem.

\begin{theorem}\label{Global} Let $f$ be a positive function and  continuously differentiable on $\mathbb R$  with $f(0)>k$. If model ~\eqref{model5} has a unique endemic equilibrium point $(I^*,R^*)$  and $\frac{df(R^*)}{dR}\not=\frac{dg(R^*)}{dR}$, then $(I^*,R^*)$ is globally stable for $I(0)>0$.
\end{theorem}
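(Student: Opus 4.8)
The plan is to reduce the statement to a planar Poincar\'e--Bendixson argument, much as in the proof of Lemma~\ref{lemma_origem_global}, but now steering every orbit toward the endemic point rather than toward the origin. First I would record the geometric setting: by Lemma~\ref{lemma_invariant} the triangle $Z=\{I\ge 0,\,R\ge 0,\,I+R\le 1\}$ is positively invariant and compact, so every forward orbit starting in $Z$ is bounded. Since $\dot I=0$ on the line $I=0$, the open half-plane $\{I>0\}$ is forward invariant; thus an orbit with $u^0=(I(0),R(0))\in Z$ and $I(0)>0$ stays in the compact set $Z\cap\{I>0\}$ for all $\tau\ge 0$. By Theorem~\ref{theorema2} the only equilibria in $Z$ are the disease-free point $(0,0)$ and the (by hypothesis unique) endemic point $(I^*,R^*)$.

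Next I would pin down the local nature of both equilibria. Because $f(0)>k$, Lemma~\ref{lemma_origem} shows $(0,0)$ is a saddle; computing the eigenvectors of $J(0,0)$ (eigenvalue $-1$ with eigenvector $(0,1)$, eigenvalue $f(0)-k>0$ with an eigenvector pointing into $\{I>0,\,R>0\}$) shows that its stable manifold is the invariant axis $I=0$ and its unstable manifold enters the interior of $Z$. For the endemic point I would argue it is in fact \emph{stable} under the stated hypotheses: writing $h=f-g$ one has $h(0)=f(0)-k>0$ while $h(R)\to-\infty$ as $R\to\left(\frac{k-1}{k}\right)^-$, so uniqueness of the root $R^*$ forces $h$ to change sign from positive to negative there, whence $h'(R^*)\le 0$, i.e. $\frac{df}{dR}(R^*)\le\frac{dg}{dR}(R^*)$. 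Combined with the hypothesis $\frac{df}{dR}(R^*)\ne\frac{dg}{dR}(R^*)$ this yields the strict inequality $\frac{df}{dR}(R^*)<\frac{dg}{dR}(R^*)$, so Theorem~\ref{theorema1} makes $(I^*,R^*)$ locally stable.

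With this in hand I would invoke the Poincar\'e--Bendixson theorem on the $\omega$-limit set $\omega(u^0)$. By Lemma~\ref{noperiodic} there is no non-constant periodic orbit in $\{I>0\}$, so $\omega(u^0)$ must be a single equilibrium or a union of equilibria joined by homoclinic/heteroclinic orbits. The locally stable sink $(I^*,R^*)$ cannot belong to any heteroclinic cycle, since a sink has no outgoing orbit; and a homoclinic loop at $(0,0)$ is impossible, because its unstable manifold lies in the forward-invariant region $\{I>0\}$ and would have to return to the origin along the stable manifold $I=0$, which it can never reach. The same observation excludes $(0,0)$ itself as the limit set: by the stable manifold theorem only points of the axis $I=0$ converge to the saddle, and our orbit lies in $\{I>0\}$. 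Hence $\omega(u^0)=\{(I^*,R^*)\}$, and since this holds for every initial condition in $Z$ with $I(0)>0$, the endemic equilibrium is globally stable on $\{I>0\}$.

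The main obstacle is the careful exclusion of the disease-free point and of degenerate polycycles from the limit set; the structural fact that makes this clean is that the stable manifold of the saddle $(0,0)$ is exactly the invariant axis $I=0$, so forward-invariance of $\{I>0\}$ blocks any return to the origin. Verifying the directions of the stable and unstable manifolds of $(0,0)$ through the eigenvectors of $J(0,0)$, and confirming that no heteroclinic graphic can form with only one saddle and one sink, are the steps that require the most care.
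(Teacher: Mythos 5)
Your proposal is correct and follows essentially the same route as the paper's proof: the sign-change argument on $h=f-g$ to get $\frac{df}{dR}(R^*)<\frac{dg}{dR}(R^*)$ and hence local stability via Theorem~\ref{theorema1}, the identification of the stable manifold of the saddle $(0,0)$ with the invariant axis $I=0$, and the Poincar\'e--Bendixson/Bendixson--Dulac exclusion of periodic orbits and polycycles to force $\omega(u^0)=\{(I^*,R^*)\}$. The only (immaterial) difference is that you identify the stable manifold of $(0,0)$ via the stable manifold theorem and the eigenvectors of $J(0,0)$, whereas the paper argues directly from the ODE that $\frac{dI}{d\tau}>0$ for small positive $I,R$, so no orbit with $I>0$ can converge to the origin.
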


\begin{proof}
Note that because we are considering a unique endemic equilibrium point $(I^*,R^*)$, $f$ and $g$ intercept only once at $R^*$. 
Because $f(0)>k=g(0)$, the continuity of $f$ and $g$ imply that $f(R)>g(R)$ if $R<R^*$ and, if $R>R^*$, then $f(R)<g(R)$, otherwise Theorem~\ref{existence} implies the existence of another interception point for some $R>R^*$. 
Hence, 
\begin{align*}
\frac{df}{dR}(R^*)\!=\!\lim_{h\to 0}\frac{f(R^*\!+\!h)\!-\!f(R^*)}{h}\!&=\!\lim_{h\to 0}\frac{f(R^*\!+\!h)\!-\!g(R^*)}{h}\\&\leq\!\lim_{h\to 0}\frac{g(R^*\!+\!h)\!-\!g(R^*)}{h}\!=\!\frac{dg}{dR}(R^*).
\end{align*}
{\color{black}
Since we are assuming that $\frac{df(R^*)}{dR}\not=\frac{dg(R^*)}{dR}$, the above inequality implies that $\frac{df(R^*)}{dR}<\frac{dg(R^*)}{dR}$. 
From Theorem~\ref{theorema1}, it follows that the unique endemic equilibrium point $(I^*,R^*)$ is locally stable.}

We analyze now the  global stability of $(I^*,R^*)$. 
Note first that, because $f(0)>k$, the Lemma~\ref{lemma_origem} implies that the disease-free equilibrium $(0,0)$ is a saddle unstable point. 
We claim that the stable manifold associated with this disease-free saddle  equilibrium point corresponds to the axis $I=0$. 
Note first that if $I(0)=0$, then Eqs.~\eqref{model5} implies that $I(t)=0$ for all $t>0$ and $R(t)=R(0)e^{-t}\to 0$ as $t\to \infty$. 
Note also that, from the continuity of $f$ and the fact that $f(0)>k$, if $I$ and $R$ take small enough positive values, then from the first equation of~\eqref{model5} we have $\frac{dI}{d\tau}>0$. 
Therefore, if $(I(t),R(t))\to (0,0)$, then $I$ can not take values always strictly positive, which means that  $I(t_0)=0$ for some $t_0$. 
However, $(0,R(t_0)e^{-(t-t_0)})$ is a solution passing  through $(I(t_0),R(t_0))=(0,R(t_0))$. 
Because the uniqueness of the solution of~\eqref{model5}, it follows that $I(t)=0$ for all $t$. 
Hence, the stable manifold associated with $(0,0)$ is the axis $I=0$.

Consider now $Z=\{0\leq I\leq 1; 0\leq R \leq 1; I+R\leq 1\}$, and $X$ any open set on the plane such that $Z\subset X$. 
Because Lemma~\ref{lemma_invariant}, any solution of~\eqref{model5} with initial conditions $u^0=(I(0),R(0))$ on $Z$ remains  bounded, and the $\omega-$limit of $u_0$, $\omega(u^0)$ satisfy $\omega(u^0)\subset Z\subset X$. 
From the Poincar\'e-Bendixson Theorem, one of the followings holds:
\begin{enumerate}
\item $\omega(u^0)$ is a periodic orbit, or,
\item $\omega(u^0)$ a connected set composed of a finite number of fixed points together with homoclinic and heteroclinic orbits connecting these, or, 
\item $\omega(u^0)$ consists of an equilibrium.
\end{enumerate}

We claim that if $I(0)>0$ then $\omega(u^0)={(I^*,R^*)}$. 
Assume first that $I(0)>0$ and $\omega(u^0)$ is a periodic orbit. 
By Lemma~\ref{noperiodic}, the orbit $\omega(u^0)$ must intercept the axis $I=0$, but because this axis is the stable manifold of $(0,0)$, this implies that $I$ remains equal to zero and $R\to 0$, so it is impossible for $I$ to periodically return to $I(0)>0$. 
Hence, $\omega(u^0)$ can not be a periodic orbit.

We argue now that if $I(0)>0$, then the endemic equilibrium $(I^*,R^*)$ belongs to $\omega(u^0)$. If $(I^*,R^*)\not \in \omega(u^0)$, then there are not heteroclinic orbits and, because  the stable manifold of $(0,0)$ is the axis $I=0$, there are not homoclinic orbits either. 
Therefore, the only possibility is that $\omega(u^0)=\{(0,0)\}$. 
From the Bolzano-Weierstrass theorem and the compacity of $Z$, we would have that $\lim_{t\to\infty}u(t)=\lim_{t\to \infty}(I(t),R(t))=(0,0)$, but this implies that $I(0)=0$ which contradicts our hypothesis.  
Hence, if $I(0)>0$, then $(I^*,R^*)\in \omega(u^0)$.

Since the endemic equilibrium $(I^*,R^*)\in \omega(u^0)$ and is locally stable, every solution that gets close enough to it, converges to it. 
This implies that the disease-free equilibrium $(0,0)$ can not belong to $\omega(u^0)$, so we conclude that  if $I(0)>0$ then $\omega(u^0)$ consists only of the endemic equilibrium $(I^*,R^*)$ and therefore all solutions with $I(0)>0$ converge to $(I^*,R^*)$.
\end{proof}

\section{Conclusion}

\label{conclusion}

We considered a general SIR epidemiological model~\eqref{model4} with an infection rate $f$ depending on the recovered population $R$. 
The main contribution of this paper is the determination of sufficient conditions for the existence, uniqueness (Theorems~\ref{theorema2}, \ref{existence} and Proposition~\ref{unique}), and stability of endemic equilibrium points (Theorems~\ref{theorema1} and~\ref{Global}). 
These results are obtained in terms of $f$ and the auxiliary function $g(R)=\frac{k-1}{\frac{k-1}{k}-R}$. 

The auxiliary function $g(R)$ can be considered a recovery-dependent threshold for the infection rate $f(R)$.
If this threshold is surpassed in some state $(I,R)$, some of the consequences may be undesirable from an epidemiological point of view.  
Theorem~\ref{theorema2} implies that when $f(R)>g(R)$, then there must exist an endemic equilibrium point and Proposition~\ref{multy} implies that, in many situations, this equilibrium will be locally stable or it will lead to another endemic equilibrium point. 
On the other hand, the results in this paper imply that if one is able to control $f$ to remain less than $g$ for all $R$, then there are not endemic equilibrium points and the disease-free equilibrium will be globally stable. 

The relationship between $f$ and $g$ generalizes the relationship between the constants $R_0$ and $1$ in the classical SIR model and shows that when considering variable parameters in epidemiological models, one could expect the appearance of variable thresholds relevant to the effective control of the diseases. 

\bibliographystyle{ieeetr}

\end{document}